\newtheorem{theorem}{Theorem}
\newtheorem{lemma}[theorem]{Lemma}
\newtheorem{corollary}[theorem]{Corollary}
\newtheorem{proposition}[theorem]{Proposition}
\newcommand{\ind}{\mathbf{1}}
\newcommand{\E}{\mathbb{E}}
\renewcommand{\P}{\mathbb{P}}
\newcounter{notecounter}
\newcommand{\IGNORE}[1]{}
\newcounter{mycount}
\newenvironment{ilist}{\begin{list}{\rm(\roman{mycount})}%
   {\usecounter{mycount}\labelwidth=1cm\itemsep 0pt}}{\end{list}}
\begin{document}

\title{Wald for non-stopping times: The rewards of impatient prophets}
\date{12 May 2014}

\author[Holroyd]{Alexander E.\ Holroyd}
\address{(AEH) Microsoft Research, 1 Microsoft Way, Redmond, WA 98052, USA}
\email{\href{mailto:holroyd at microsoft.com}{holroyd at microsoft.com}}
\urladdr{\url{http://research.microsoft.com/~holroyd/}}

\author[Peres]{Yuval Peres}
\address{(YP) Microsoft Research, 1 Microsoft Way, Redmond, WA 98052, USA}
\email{\href{mailto:peres@microsoft.com}{peres@microsoft.com}}
\urladdr{\url{http://research.microsoft.com/~peres/}}

\author[Steif]{Jeffrey E.\ Steif}
\address{(JES) Chalmers University of Technology and G\"{o}teborg University,
SE-41296 Gothenburg, Sweden}
\email{\href{mailto:steif@math.chalmers.se}{steif@math.chalmers.se}}
\urladdr{\url{http://www.math.chalmers.se/~steif/}}


\begin{abstract}
Let $X_1,X_2,\ldots$ be independent identically distributed nonnegative
random variables.  Wald's identity states that the random sum
$S_T:=X_1+\cdots+X_T$ has expectation $\E T \cdot \E X_1$ provided $T$ is a
stopping time. We prove here that for any $1<\alpha\leq 2$,
if $T$ is an arbitrary nonnegative random variable, then
$S_T$ has finite expectation provided that
$X_1$ has finite $\alpha$-moment and $T$ has finite $1/(\alpha-1)$-moment.
We also prove a variant in which $T$ is assumed to have a finite exponential moment.
These moment conditions are sharp in the sense that for any i.i.d.\ sequence
$X_i$ violating them, there is a $T$ satisfying the given condition for which
$S_T$ (and, in fact, $X_T$) has infinite expectation.

An interpretation of this is given in terms of a prophet being more rewarded than a gambler
when a certain {\em impatience} restriction is imposed.
\end{abstract}

\maketitle

\section{Introduction}

Let $X_1,X_2,\ldots$ be independent identically distributed (i.i.d.)
nonnegative random variables, and let $T$ be a nonnegative integer-valued
random variable.  Write $S_n=\sum_{i=1}^n X_i$ and $X=X_1$.  Wald's identity
\cite{wald} states that if $T$ is a {\em stopping time} (which is to say that
for each $n$, the event $\{T=n\}$ lies in the $\sigma$-field generated by
$X_1,\ldots, X_n$), then
\begin{equation}\label{wald}
\E S_T = \E T \cdot  \E X.
\end{equation}
In particular, if $X$ and $T$ have finite mean then so does $S_T$.

It is natural to ask whether similar conclusions can be obtained if we drop
the requirement that $T$ be a stopping time.  It is too much to hope that the
equality \eqref{wald} still holds.  (For example, suppose that $X_i$ takes
values $0,1$ with equal probabilities, and let $T$ be $1$ if $X_2=0$ and
otherwise $2$.  Then $\E S_T=1\neq \tfrac32 \cdot \tfrac12 = \E T \cdot \E
X$.)  However, one may still ask when $S_T$ has finite mean.  It turns out
that finite means of $X$ and $T$ no longer suffice, but stronger moment
conditions do.  Our main result gives sharp moment conditions for this
conclusion to hold. In addition, when the moment conditions fail, with a
suitably chosen $T$ we can arrange that even the final summand $X_T$ has
infinite mean.  Here is the precise statement.  (If $T=0$ we take by
convention $X_T=0$).

\begin{theorem}\label{thm:main}
Let $X_1,X_2,\ldots$ be i.i.d.\ nonnegative random variables, and write
$S_n:=\sum_{i=1}^n X_i$ and $X=X_1$. For each $\alpha\in (1,2]$, the
following are equivalent.
\begin{ilist}
  \item $\E{X^\alpha}<\infty$.
  \item For every nonnegative integer-valued random variable $T$ satisfying
$\E T^{1/(\alpha-1)}<\infty$, we have $\E{S_T}<\infty$.
  \item For every nonnegative integer-valued random variable $T$ satisfying
$\E T^{1/(\alpha-1)}<\infty$, we have $\E{X_T}<\infty$.
\end{ilist}
\end{theorem}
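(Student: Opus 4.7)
The proof splits into three implications. The easy one is (ii) $\Rightarrow$ (iii): since $X_T \le S_T$ always (with $X_0 = S_0 = 0$ by convention), condition (ii) immediately gives $\E X_T \le \E S_T < \infty$.

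For (i) $\Rightarrow$ (ii), I plan to decompose $S_T = T\E X + U_T$ with $U_T := \sum_{n \le T} Y_n$ and $Y_n := X_n - \E X$. The first summand has finite expectation since $\E X \le (\E X^\alpha)^{1/\alpha}$ and $\E T \le (\E T^{1/(\alpha-1)})^{\alpha-1}$ (using $1/(\alpha-1) \ge 1$), so it suffices to bound $\E |U_T|$. Using $|U_T| \le V_T := \max_{k \le T}|U_k|$ together with the Marcinkiewicz--Zygmund maximal inequality $\E V_n^\alpha \le C_\alpha n \,\E|Y|^\alpha$, I then plan to combine this with the hypothesis on $T$ via a dyadic decomposition: splitting $\{T \in I_m\}$ for $I_m := [2^m, 2^{m+1})$ and applying Hölder on each block yields
\[\E[V_T\,;\, T \in I_m] \le \|V_{2^{m+1}}\|_\alpha \,\P(T \in I_m)^{(\alpha-1)/\alpha},\]
which must be summed and compared with the hypothesis $\E T^{1/(\alpha-1)} < \infty$. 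The hard point is that the exponents sit at exactly the critical threshold: raising $\P(T \in I_m)$ to a power less than $1$ does not preserve summability, so the naive block-by-block Hölder estimate is insufficient. Additional structure must be exploited, for example via the weak-$L^\alpha$ behaviour of the running maximum $\sup_k |U_k|/k^{1/\alpha}$, or via a direct tail integration of $\P(|U_T| > t) \le \P(\tau_t \le T)$ with $\tau_t := \inf\{k : |U_k| > t\}$ a stopping time for which tail and moment bounds are cleaner.

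For the contrapositive $\neg$(i) $\Rightarrow \neg$(iii), assume $\E X^\alpha = \infty$, equivalently $\sum_n \P(X > n^{1/\alpha}) = \infty$. I plan to construct $T$ by choosing thresholds $(c_n)$ and setting $T = n$ on the disjoint first-occurrence events $A_n := \{X_n > c_n\} \cap \bigcap_{k < n}\{X_k \le c_k\}$, with $(c_n)$ calibrated so that $\sum_n n^{1/(\alpha-1)} \P(A_n) < \infty$ (yielding $\E T^{1/(\alpha-1)} < \infty$) while $\sum_n \E[X_n\,;\,A_n] = \infty$ (yielding $\E X_T = \infty$). The divergence of $\E X^\alpha$ provides the slack needed for this simultaneous control, and the main subtlety will be choosing $(c_n)$ so that the ``default'' event where no $A_n$ triggers carries no expectation cost.
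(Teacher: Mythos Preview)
Your proposal has genuine gaps in both nontrivial implications.

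\textbf{(i) $\Rightarrow$ (ii).} You correctly diagnose that the Marcinkiewicz--Zygmund bound $\E V_n^\alpha \le C_\alpha n\,\E|Y|^\alpha$ combined with block-by-block H\"older sits exactly at the critical exponent and fails. But neither of your proposed rescues works as stated. Alternative (a) breaks down already at $\alpha=2$: by the law of the iterated logarithm, $\sup_k |U_k|/k^{1/2}=\infty$ a.s., so there is no weak-$L^2$ bound to exploit. Alternative (b) is the right direction, but if you unfold $\P(\tau_t\le T)\le \P(\tau_t\le m)+\P(T>m)$ with $m\approx t^{\alpha-1}$ and integrate in $t$, the term $\int \P(\tau_t\le t^{\alpha-1})\,dt$ becomes (after substitution) $\sum_n n^{(2-\alpha)/(\alpha-1)}\P\bigl(\max_{k\le n}|U_k|\ge n^{1/(\alpha-1)}\bigr)$, whose finiteness is precisely the Hsu--Robbins theorem (for $\alpha=2$) or Katz's extension (for $\alpha<2$). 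These complete-convergence results are strictly stronger than the M--Z moment inequality---Chebyshev from $\E U_n^2=n\sigma^2$ gives only $\P(|U_n|\ge n)\le \sigma^2/n$, which is not summable---and they are the missing ingredient. The paper's argument is exactly this tail split $\P(S_T\ge n)\le\P(T\ge \lceil n^{\alpha-1}\rceil)+\P(S_{\lceil n^{\alpha-1}\rceil}\ge n)$ followed by Hsu--Robbins/Katz.

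\textbf{$\neg$(i) $\Rightarrow$ $\neg$(iii).} Your first-occurrence construction is more delicate than you indicate. First, note that if $\sum_n\P(X>c_n)=\infty$ then your default event has probability zero, $T$ coincides a.s.\ with the stopping time $T'=\min\{n:X_n>c_n\}$, and Wald forces $\E X_T\le\E S_T=\E T\cdot\E X<\infty$---so the construction fails. You therefore need $\sum_n\P(X>c_n)<\infty$, and then the two requirements become
\[
\sum_n n^{1/(\alpha-1)}\P(X>c_n)<\infty
\quad\text{and}\quad
\sum_n \E[X;\,X>c_n]=\infty.
\]
The phrase ``the divergence of $\E X^\alpha$ provides the slack'' hides the real work: the obvious choice $c_n=n^{\gamma}$ fails for every $\gamma$ (the two conditions force $\gamma>1/(\alpha-1)$ and $\gamma\le 1/(\alpha-1)$ respectively). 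What is needed is a converse-H\"older / gliding-hump construction: from $\E X^\alpha=\infty$ one builds a function $g\ge 0$ with $\E g(X)^{\alpha/(\alpha-1)}<\infty$ but $\E Xg(X)=\infty$, and the thresholds come from the level sets of $g$. This is a separate lemma that you have not supplied. The paper proves exactly this lemma and then uses the \emph{last}-occurrence time $T_g=\max\{k:g(X_k)\ge k\}$, for which $\{k\le T_g\}\supseteq\{g(X_k)\ge k\}$ gives an immediate lower bound $\E S_{T_g}\ge \E X\lfloor g(X)\rfloor=\infty$; a short identity then transfers this to $\E X_{T_g}=\infty$. Your first-occurrence variant can be made to work once the gliding hump is in hand, but the last-occurrence route is cleaner and sidesteps the stopping-time trap entirely.
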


The special case $\alpha=2$ of Theorem~\ref{thm:main} is particularly
natural: then the condition on $X$ in (i) is that it have finite variance,
and the condition on $T$ in (ii) and (iii) is that it have finite mean. At
the other extreme, as $\alpha\downarrow 1$, (ii) and (iii) require
successively higher moments of $T$ to be finite.  One may ask what happens
when $T$ satisfies an even stronger condition such as a finite exponential
moment -- what condition must we impose on $X$, if we are to conclude $\E
S_T<\infty$? The following provides an answer, in which, moreover, the
independence assumption may be relaxed.
\begin{theorem}\label{thm:exp}
Let $X_1,X_2,\ldots$ be i.i.d.\ nonnegative random variables, and write
$S_n:=\sum_{i=1}^n X_i$ and $X=X_1$. The following are equivalent.
\begin{ilist}
  \item $\E[X(\log X)^+]<\infty$.
  \item For every nonnegative integer-valued random variable $T$ satisfying
$\E e^{cT}<\infty$ for some $c>0$, we have $\E{S_T}<\infty$.
  \item For every nonnegative integer-valued random variable $T$ satisfying
$\E e^{cT}<\infty$ for some $c>0$, we have $\E{X_T}<\infty$.
\end{ilist}
Moreover, if $X_1,X_2,\ldots$ are assumed identically distributed but not
necessarily independent, then (i) and (ii) are equivalent.
\end{theorem}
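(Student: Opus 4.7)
\medskip

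\noindent\textbf{Proof proposal for Theorem \ref{thm:exp}.}

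The plan is to establish the cycle (i) $\Rightarrow$ (ii) $\Rightarrow$ (iii) $\Rightarrow$ (i). The middle implication is immediate from $X_T \leq S_T$, so the work lies in the other two steps. I will then check that the surviving arguments handle the moreover clause automatically.

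For (i) $\Rightarrow$ (ii) I would start from $\E S_T = \sum_{n\geq 1}\E[X_n \ind\{T\geq n\}]$ and apply Markov to the hypothesis $\E e^{cT}<\infty$ to obtain $\P(T\geq n) \leq Ce^{-cn}$. Splitting each summand at the truncation $B_n := e^{cn/2}$ gives
\[
\E[X_n\ind\{T\geq n\}] \leq B_n\,\P(T\geq n) + \E[X_n\ind\{X_n>B_n\}] \leq Ce^{-cn/2} + \E[X\ind\{X>B_n\}].
\]
Summing in $n$, the first piece is a convergent geometric series, and a Fubini exchange turns the second into $\E\bigl[X\cdot \#\{n\geq 1 : X>e^{cn/2}\}\bigr] \leq (2/c)\,\E[X(\log X)^+]$, finite by (i). This bound uses only that the $X_i$ are identically distributed, so it simultaneously delivers the (i) $\Rightarrow$ (ii) half of the moreover clause.

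For (iii) $\Rightarrow$ (i) I would argue by contrapositive. If $\E X=\infty$, the choice $T\equiv 1$ already yields $\E X_T=\infty$. Otherwise $\E X<\infty$ but $\E[X(\log X)^+]=\infty$; writing $p_n:=\P(X>2^n)$, Markov gives $p_n \leq \E X/2^n$ so $\sum p_n<\infty$. I would then define
\[
T := \min\{n\geq 1 : X_n > 2^n\},
\]
with the convention $T=0$ on the positive-probability event that no such $n$ exists. Because $\{T=0\}$ depends on the entire tail of the sequence, $T$ is not a stopping time; nevertheless, by independence, $\P(T=n) \leq p_n$ for $n\geq 1$, whence $\E e^{cT} \leq 1 + \E X\sum_n (e^c/2)^n < \infty$ for every $c<\log 2$. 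Meanwhile, with $c_\infty := \prod_i(1-p_i)>0$,
\[
\E X_T = \sum_{n\geq 1}\E[X\ind\{X>2^n\}]\prod_{i<n}(1-p_i) \geq c_\infty\, \E\bigl[X\cdot\#\{n\geq 1: X>2^n\}\bigr],
\]
and the last expectation is comparable to $\E[X(\log X)^+]=\infty$. The moreover direction (ii) $\Rightarrow$ (i) then follows from the same construction: an iid sequence is in particular identically distributed, and the witness above satisfies $\E S_T\geq \E X_T=\infty$.

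The main obstacle, I expect, will be the counterexample in (iii) $\Rightarrow$ (i). A block-based alternative, in which one searches over $\approx 1/p_k$ candidates for an exceedance above level $2^k$, produces block sizes that grow exponentially in $k$ (whenever $p_k$ is geometrically small) and consequently $T$ with only polynomial tails, far from exponential. Pinning the threshold to the time index by looking at $\{X_n>2^n\}$ is what makes the tail of $T$ genuinely geometric, and letting $T$ take the ``backward-looking'' value $0$ on the forever-empty event is what breaks the stopping-time structure so that $\E X_T$ can diverge at the critical $L\log L$ rate instead of being forced finite by Wald's identity.
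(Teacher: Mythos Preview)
Your (i) $\Rightarrow$ (ii) is essentially the paper's argument, just with truncation at $e^{cn/2}$ rather than $e^{cn}$; both correctly use only identical distribution. Your construction for (iii) $\Rightarrow$ (i) is genuinely different and in one respect slicker: the paper takes the \emph{last}-exit time $T=\max\{k:X_k\ge e^k\}$, shows $\E S_T=\infty$, and then appeals to a lemma (for last-exit times $\E S_T=\E[(T-1)^+]\,\E X+\E X_T$) to deduce $\E X_T=\infty$. Your first-passage variant reads off $\E X_T=\infty$ directly, without that extra lemma.

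The gap is in the moreover clause. Your sentence ``an iid sequence is in particular identically distributed'' inverts the quantifier. The moreover clause asserts that for \emph{every} identically distributed sequence $(X_i)$, condition (ii) implies (i); equivalently, given an arbitrary identically distributed sequence with $\E[X(\log X)^+]=\infty$, you must exhibit a $T$ with exponential moment and $\E S_T=\infty$ for \emph{that} sequence. Your witness $T=\min\{n:X_n>2^n\}$ (else $0$) does not do this: the lower bound on $\E X_T$ relied on the factorisation $\E[X_n\ind\{T=n\}]=\E[X\ind\{X>2^n\}]\prod_{i<n}(1-p_i)$, which uses independence. Concretely, if $X_1=X_2=\cdots=X$ with $\E X<\infty$, then your $T$ equals $1$ on $\{X>2\}$ and $0$ otherwise, so $\E S_T=\E[X\ind\{X>2\}]<\infty$ regardless of whether $\E[X(\log X)^+]$ is finite.

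What makes the paper's last-exit choice work in the dependent case is the \emph{deterministic} inclusion $\{X_k\ge e^k\}\subseteq\{T\ge k\}$, which gives
\[
\E S_T=\sum_{k\ge 1}\E\bigl[X_k\ind\{T\ge k\}\bigr]\ge\sum_{k\ge 1}\E\bigl[X_k\ind\{X_k\ge e^k\}\bigr]=\sum_{k\ge 1}\E\bigl[X\ind\{X\ge e^k\}\bigr]
\]
using only identical distribution. That inclusion is the missing idea; a first-passage time has no analogous pointwise inequality, which is precisely why the distinction between $\min$ and $\max$ matters here.
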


On the other hand, in the following variant of Theorem~\ref{thm:main},
dropping independence results in a different moment condition for $T$.
\begin{proposition}\label{thm:GeneralMomentNOTiid}
Let $X$ be a nonnegative random variable. For each $\alpha\in (1,2]$, the
following are equivalent.
\begin{ilist}
\item $\E{X^\alpha}<\infty$.
\item\sloppypar For every nonnegative integer valued random variable $T$
    satisfying
    $\E T^{\alpha/(\alpha-1)}<\infty$, and for any $X_1,X_2,\ldots$
    identically distributed with $X$ (but not necessarily independent),
    we have $\E S_T<\infty$.
\end{ilist}
\end{proposition}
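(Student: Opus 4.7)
The plan is to prove the two directions separately. Let $\gamma := \alpha/(\alpha-1)$ be the exponent conjugate to $\alpha$, and note that $1/(\alpha-1) = \gamma - 1$.

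For (i) $\Rightarrow$ (ii), I would expand $\E S_T = \sum_{n \geq 1} \E[X_n \mathbf{1}\{T \geq n\}]$ by Tonelli, and for each $n$ split $X_n$ at the threshold $a_n := n^{1/(\alpha-1)}$. The low piece is bounded crudely by $\E[X_n \mathbf{1}\{X_n \leq a_n\} \mathbf{1}\{T \geq n\}] \leq a_n \P(T \geq n)$; the high piece uses only the shared marginal and is bounded by $\E[X \mathbf{1}\{X > a_n\}]$. Swapping sums, the low contribution is $\sum_n n^{\gamma-1} \P(T \geq n) = \E \sum_{n=1}^T n^{\gamma-1} \leq C \, \E T^\gamma < \infty$, and the high contribution is $\E\bigl[X \cdot \#\{n \geq 1 : X^{\alpha-1} > n\}\bigr] \leq \E X^\alpha < \infty$. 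Independence is never used.

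For (ii) $\Rightarrow$ (i), I would argue by contrapositive. Assume $\E X^\alpha = \infty$, and take the extremely correlated choice $X_1 = X_2 = \cdots = X$, so that $S_T = T X$. It then suffices to produce a nonnegative integer-valued $T$, measurable with respect to $X$, with $\E T^\gamma < \infty$ and $\E[TX] = \infty$. This is exactly the statement that the identity $t \mapsto t$ fails to lie in $L^\alpha$ of the law of $X$, and follows from $L^\gamma$--$L^\alpha$ duality. Concretely, for each $k$ the truncation $g_k(x) := x^{\alpha-1} \mathbf{1}\{x \leq N_k\}$ satisfies $\E[g_k(X)^\gamma] = \E[g_k(X) \, X] = \E[X^\alpha \mathbf{1}\{X \leq N_k\}]$ (using $(\alpha-1)\gamma = \alpha$), which tends to $\infty$ as $N_k \uparrow \infty$. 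Choosing $N_k$ so that $\|g_k\|_\gamma^{\gamma-1} \geq 4^k$ and setting $g := \sum_k 2^{-k} g_k / \|g_k\|_\gamma$ yields $\|g\|_\gamma \leq 1$ and $\E[g(X)\, X] = \sum_k 2^{-k} \|g_k\|_\gamma^{\gamma-1} \geq \sum_k 2^k = \infty$. Then $T := \lceil g(X) \rceil$ is the desired bad example.

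The forward direction is routine truncation; the reason it requires the $\gamma$-th moment of $T$, rather than the weaker $(\gamma-1)$-th moment used in Theorem~\ref{thm:main}, is simply that without independence one cannot exploit concentration of the truncated partial sums and must pay for the crude bound $a_n \P(T \geq n)$. The main subtlety is the duality construction in the converse: a naive polynomial attempt $T = \lceil X^a \rceil$ can provably fail (when $\E X^\beta < \infty$ for every $\beta < \alpha$, the constraint $\E T^\gamma < \infty$ forces $a < \alpha - 1$ and hence $\E[TX] = \E X^{a+1} < \infty$), so the geometric-series device above is essential.
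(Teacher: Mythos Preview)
Your proof is correct and follows essentially the same architecture as the paper's. The forward direction is identical: both truncate $X_k$ at level $k^{1/(\alpha-1)}$ and observe that the low part is controlled by $\E T^{\alpha/(\alpha-1)}$ while the high part only needs the common marginal and is controlled by $\E X^\alpha$. For the converse, both take $X_1=X_2=\cdots=X$ so that $S_T=TX$ and reduce to the converse of H\"older's inequality; the paper invokes its Lemma~\ref{cor:RealAnalysisGeneral} (an interval ``gliding hump'') and then sets $T=g(X)$, whereas you build the witness $g$ directly as a normalized-truncation series $\sum_k 2^{-k} g_k/\|g_k\|_\gamma$ and take $T=\lceil g(X)\rceil$. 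Your remark that the naive choice $T=\lceil X^a\rceil$ can fail is a nice addition.
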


In order to prove the implications (iii) $\Rightarrow$ (i) of Theorems
\ref{thm:main} and \ref{thm:exp}, we will assume that (i) fails, and
construct a suitable $T$ for which $\E X_T=\infty$ (and thus also $\E
S_T=\infty$).
This $T$ will be the {\em last} time the random sequence
is in a certain (time-dependent) deterministic set, i.e.\
$$T:=\max\{n: X_n\in B_n\}$$
for a suitable sequence of sets $B_n$.  It is interesting to note that, in contrast, no
$T$ of the form $\min\{n: X_n\in B_n\}$ could work for this purpose, since
 such a $T$ is a stopping time, so Wald's identity applies. In the context of
Theorem \ref{thm:exp}, $T$ will take the form
$$T:=\max\{n: X_n\geq f(n)\}$$ for a suitable function $f$.

The results here bear an interesting relation to so-called prophet
inequalities; see \cite{HK} for a survey. A central prophet inequality (see \cite{KS}) states that if
$X_1,X_2,\ldots$ are independent (not necessarily identically distributed)
nonnegative random variables then
\begin{equation}\label{prophet}
\sup_{U\in \mathcal{U}} \E X_U \le 2\sup_{S\in \mathcal{S}} \E X_S,
\end{equation}
where $\mathcal{U}$ denotes the set of all positive integer-valued random
variables and $\mathcal{S}$ denotes the set of all stopping times.  The left
side is of course equal to $\E\sup_i X_i$.  The factor $2$ is sharp. The
interpretation is that a prophet and a gambler are presented
sequentially with the values $X_1,X_2,\ldots$, and each can stop at any time
$k$ and then receive payment $X_k$.  The prophet sees the entire sequence in
advance and so can obtain the left side of \eqref{prophet} in expectation,
while the gambler can only achieve the supremum on the right.  Thus
\eqref{prophet} states that the prophet's advantage is at most a factor of
$2$.

The inequality \eqref{prophet} is uninteresting when $(X_i)$ is an infinite
i.i.d.\ sequence, but for example applying it to $X_i \ind[i\leq n]$ (where
$n$ is fixed and $(X_i)$ are i.i.d.) gives
\begin{equation}\label{prophetbdd}
\sup_{\substack{U\in \mathcal{U}:\\U\leq n}}
\E X_U \le 2\sup_{\substack{S\in \mathcal{S}:\\S\leq n}} \E X_S,
\end{equation}
(and the factor of $2$ is again sharp). How does this result change if we
replace the condition that $U$ and $S$ are bounded by $n$ with a moment
restriction?  It turns out that the prophet's advantage can become infinite,
in the following sense.  Let $X_1,X_2,\ldots$ be any i.i.d.\ nonnegative
random variables with mean $1$ and infinite variance.  By
Theorem~\ref{thm:main}, there exists an integer-valued random variable $T$ so
that $\mu:=\E{T}< \infty$ but $\E{X_T}= \infty$.  Then we have
$$\sup_{\substack{U\in \mathcal{U}:\\ \E U\leq \mu}} \E X_U =\infty;
 \qquad
 \sup_{\substack{S\in \mathcal{S}:\\ \E S\leq \mu}} \E X_S \le \mu.
$$
Here the first claim follows by taking $U=T$ and the second claim follows from Wald's
identity.

Interpreting impatience as meaning that the time we stop at has a mean of at most $\mu$, we see
that this impatience hurts the gambler much more than the prophet.


Our proof of the implication (i) $\Rightarrow$ (ii) in Theorem \ref{thm:main}
will rely on a concentration inequality which is due to Hsu and Robbins
\cite{HR} for the important special case $\alpha=2$, and a generalization due
to Katz \cite{K} for $\alpha<2$.  For expository reasons, we include a
proof of the Hsu-Robbins inequality, which is simpler than the original
proof, and is an adaptation of that given in \cite{CT}.  Thus, we give a
complete proof from first principles of Theorem \ref{thm:main} in the case
$\alpha=2$.  Erd\H{o}s \cite{E} proved a converse of the Hsu-Robbins result;
we will also obtain this converse in the case of nonnegative random variables
as a corollary of our results.

Throughout the article we will write $X=X_1$ and $S_n:=\sum_{i=1}^n X_i$.  If
$T=0$ then we take $X_T=0$ and $S_T=0$.

\IGNORE{

\section{old intro}

The starting point for this paper is the following theorem, known as Wald's Theorem.

\begin{theorem}\label{thm:wald}
Let $X_1,X_2,\ldots$ be i.i.d. random variables with finite mean $\mu$ and let $T$ be a nonnegative stopping time
(meaning that, for each $n$, the event $\{T=n\}$ is a function of the random variables
$X_1,X_2,\ldots,X_n$) with finite mean. Then
$$
S_T:=\sum_{i=1}^T X_i
$$
has finite mean $\mu\E{T}$.
\end{theorem}

{\bf Remark} This is the basic form of Wald's Theorem; there are more general versions but for expository purposes,
we only describe this simple case.

It is simple to see that for general integer valued random variables $T$ (not necessarily stopping times),
this might not be true. For example, let the $X_i$'s be 0 and 1 each with probability $1/2$ and let $T$ be 1
if $X_2=0$ and be 2 if $X_2=1$. Then $\mu=1/2$, $\E{T}=3/2$ but $\E{S_T}=1 > 3/4 =\mu\E{T}$.

We now consider the following general setup.
Let $X_1,X_2,\ldots$ be i.i.d. nonnegative random variables with finite mean and let
$T$ be an integer valued random variable (not necessarily a stopping time) with finite mean.
Let $X$ denote a random variable having the common distribution of the $X_i$'s.

{\rm Question: Under what moment conditions on $X$ and $T$ can we be assured that $\E{S_T}<\infty$?}

The following three theorems essentially answer this question and in particular certainly show that
$X$ and $T$ each having finite mean does not suffice for $\E{S_T}$ to be finite.

\begin{theorem}\label{thm:L2}
Let $X_1,X_2,\ldots$ be i.i.d.\ nonnegative random variables with finite mean having the same distribution as $X$.
Then the following are equivalent. \\
(i). $X$ has a finite variance. \\
(ii). For all integer valued random variables $T$ with finite mean, it holds that $\E{S_T}<\infty$.
\end{theorem}

The next result extends the previous result which corresponds to the case $\alpha=2$.

\begin{theorem}\label{thm:GeneralMoment}
Let $X_1,X_2,\ldots$ be i.i.d.\ nonnegative random variables with finite mean having the same
distribution as $X$. Then, for each $\alpha\in (1,2)$, the following are equivalent. \\
(i). $\E{X^\alpha}<\infty$. \\
(ii). For all integer valued random variables $T$ satisfying
$\E{T^\frac{1}{\alpha-1}}<\infty$, it holds that $\E{S_T}<\infty$.
\end{theorem}

Our last result tells us what happens when $X$ just barely has a first moment.
A nonnegative random variable is said to have ``exponential tails'' if there exist
positive $C$ and $\lambda$ so that
$$
P(T\ge x)\le C e^{-\lambda x} \mbox{ for all } x\ge 0.
$$

\begin{theorem}\label{thm:ExpTails}
Let $X_1,X_2,\ldots$ be i.i.d.\ nonnegative random variables with finite mean having the same distribution as $X$.
Then the following are equivalent. \\
(i). $\E{X(\log X\vee 0)}<\infty$. \\
(ii). For all integer valued random variables $T$ having exponential tails,
it holds that $\E{S_T}<\infty$.
\end{theorem}

The first two results can be described in terms of the function
$x\to 1/(x-1)$ defined on $(1,2]$.
(SOMEONE DRAW THE FUNCTION $x$ goes to $1/(x-1)$ on this interval!!!!!).
If $\E{X^\alpha}$ and $\E{T^\beta}$ are each finite with $(\alpha,\beta)$ sitting
on or above this curve, then $\E{S_T}<\infty$. Conversely, for each
$(\alpha,\beta)$ sitting below this curve, there exist $X$ and $T$
so that $\E{X^\alpha}$ and $\E{T^\beta}$ are each finite but such that
$\E{S_T}=\infty$.

While, as far as we know, Theorems \ref{thm:L2}, \ref{thm:GeneralMoment} and \ref{thm:ExpTails}
are new,  we intend for this to be an expository article and hence we will prove
most of the results from first principles.

}

\section{The case of exponential tails}

In this section we give the proof of Theorem~\ref{thm:exp}, which is
relatively straightforward.  We start with a simple lemma relating $X_T$ and
$S_T$ for $T$ of the form that we will use for our counterexamples.  The same
lemma will be used in the proof of Theorem~\ref{thm:main}.

\begin{lemma}\label{last}
Let $X_1,X_2,\ldots$ be i.i.d.\ nonnegative random variables. Let $T$ be
defined by
$$T=\max\{k: X_k\in B_k\}$$
for some sequence of sets $B_k$ for which this set is a.s.\ finite, and where we take
$T=0$ and $X_T=0$ when the set is empty.  Then
$$\E S_T= \E [(T-1)^+]\cdot\E X + \E X_T.$$
\end{lemma}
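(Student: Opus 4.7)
The plan is to condition on the value of $T$ and exploit the defining structural feature of a \emph{last} hitting time: the event $\{T=n\}$ equals $\{X_n\in B_n\}\cap\bigcap_{j>n}\{X_j\notin B_j\}$, which is a function of $X_n, X_{n+1}, X_{n+2},\ldots$ alone. In particular, $\{T=n\}$ is independent of $(X_1,\ldots,X_{n-1})$, while of course being highly dependent on $X_n$ itself. This is the one observation that drives everything.

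Write $S_T=\sum_{k=1}^T X_k\cdot\ind_{T\geq 1}$ and split $S_T=X_T+\sum_{k=1}^{T-1}X_k$ on $\{T\geq 1\}$. By nonnegativity I may use Tonelli freely to exchange sums and expectations. Then
\[
\E\Bigl[\sum_{k=1}^{T-1}X_k\Bigr]=\sum_{n=1}^\infty \E\Bigl[\ind_{T=n}\sum_{k=1}^{n-1}X_k\Bigr],
\]
and on the right, the independence above lets me factor each summand as $\pr{T=n}\cdot\E[X_1+\cdots+X_{n-1}]=(n-1)\,\E X\cdot \pr{T=n}$. Summing over $n\geq 1$ yields $\E[(T-1)^+]\cdot\E X$. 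Adding $\E X_T$ (which absorbs the $X_T$ piece on $\{T\geq 1\}$ and equals $0$ on $\{T=0\}$ by the stated convention) gives the identity.

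I do not expect a serious obstacle here: the lemma is essentially a bookkeeping consequence of the independence $\{T=n\}\perp (X_1,\ldots,X_{n-1})$. The only thing to be careful about is the conventions when $T=0$ (both $X_T$ and $S_T$ are $0$, and $(T-1)^+=0$, so the identity holds trivially there), and the justification for interchanging the infinite sum with expectation, which is immediate from $X_k\geq 0$ via Tonelli. Note the identity would fail for the \emph{min}-type stopping time variant precisely because in that case the event $\{T=n\}$ depends on $X_1,\ldots,X_{n-1}$ too, breaking the factorization — this is exactly the asymmetry the paper exploits later.
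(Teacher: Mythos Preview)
Your proof is correct and follows essentially the same approach as the paper: the key observation is that $\{T=n\}$ depends only on $X_n,X_{n+1},\ldots$ and is therefore independent of $S_{n-1}$, after which one splits $S_n=S_{n-1}+X_n$ and factors the expectation. The paper's presentation is nearly identical (writing $\E S_T=\sum_k \E[S_k\ind(T=k)]$, splitting off $S_{k-1}$, and invoking the same independence), and your handling of the $T=0$ convention and the Tonelli justification is slightly more explicit.
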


\begin{proof}
Observe that $\ind[T=k]$ and $S_{k-1}$ are independent for every $k\geq 1$.
Therefore,
\begin{align*}
\E S_T &= \E\sum_{k=1}^\infty S_k \ind[T=k]\\
&=\E\sum_{k=1}^\infty (S_{k-1}+X_k)\ind[T=k]\\
&=\sum_{k=1}^\infty \E S_{k-1}\cdot \P(T=k) + \E\sum_{k=1}^\infty X_k \ind[T=k]\\
&=\E [(T-1)^+]\cdot \E X + \E X_T.\qedhere
\end{align*}
\end{proof}


\begin{proof}[Proof of Theorem~\ref{thm:exp}]
We first prove that (i) and (ii) are equivalent, assuming only that the $X_i$
are identically distributed (not necessarily independent).

Assume that (i) holds, i.e.\ $\E[X(\log X)^+]<\infty$, and that $T$ is a
nonnegative integer-valued random variable satisfying $\E e^{cT}<\infty$.
Observe that $X_k\leq e^{ck}+X_k\ind[X_k >e^{ck}]$, so
$$
S_T\le \sum_{k=1}^T  e^{ck}+  \sum_{k=1}^T X_k \ind[X_k\ge e^{ck}].
$$
The first sum equals
$$
\frac{e^c(e^{cT}-1)}{e^c-1}
$$
which has finite expectation.
The expectation of the second sum is at most
$$
\sum_{k=1}^\infty \E\bigl( X\ind [X\ge e^{ck}]\bigr)=
\E\sum_{k=1}^\infty X\ind[X\ge e^{ck}]
=\E \biggl(X \Bigl\lfloor \frac{(\log X)^+}{c}\Bigr\rfloor\biggr)
<\infty.
$$
Hence $\E{S_T}<\infty$ as required, giving (ii).

Now assume that (i) fails, i.e.\ $\E [X(\log X)^+]=\infty$, but (ii) holds
(still without assuming independence of the $X_i$).  Taking $T\equiv 1$ in
(ii) shows that $\E X<\infty$.  Now let
\begin{equation}\label{exp-T}
 T:=\max\{k\ge 1:X_k\ge e^k\},
\end{equation}
where $T$ is taken to be 0 if the set above is empty and $\infty$ if it is
unbounded.  Then
$$
\P(T\ge k)\le \sum_{i=k}^\infty \P(X_i\ge e^i)\le
\sum_{i=k}^\infty \frac{\E{X}}{e^i}
$$
by Markov's inequality. The last sum is $ (\E X) e^{1-k}/(e-1)$, and hence
$\E e^{ck}<\infty$ for suitable $c>0$ (and in particular $T$ is a.s. finite).
On the other hand,
$$
\E{S_T}=\E\sum_{k=1}^\infty  X_k \ind[k\le T]
\geq \E\sum_{k=1}^\infty  X \ind[X\ge e^k]
=\E\bigl(X\lfloor (\log X)^+\rfloor\bigr),
$$
which is infinite, contradicting (ii).

Now assume that the $X_i$ are i.i.d.  We have already established that (i)
and (ii) are equivalent, and (ii) immediately implies (iii) since $S_T\geq
X_T$.  It therefore suffices to show that (iii) implies (i).  Suppose (i) fails and (iii) holds.  Taking $T\equiv1$ in (iii) shows that $\E X<\infty$.  Now
take the same $T$ as in \eqref{exp-T}.  As argued above, $\E S_T=\infty$ and
$\E e^{cT}<\infty$ for some $c >0$ (so $\E T<\infty$).  Hence (iii) gives $\E X_T<\infty$.  But
this contradicts Lemma~\ref{last}.
\end{proof}

{\bf Remark} Conditions (i) and (iii) cannot be equivalent if the i.i.d.\ condition is dropped
since if $X_1=X_2=X_3=\ldots$, then $X_T=X_1$ for every $T$ and so (iii) just corresponds to
$X$ having a first moment.

\section{The case $\alpha=2$ and the Hsu-Robbins Theorem}

In this section we prove Theorem~\ref{thm:main} in the important special case
$\alpha=2$ (so $1/(\alpha-1)=1$).   We will use the following result of Hsu
and Robbins \cite{HR}. For expository purposes we include a proof of this
result, which is simpler than the original proof, and is based on an argument
from \cite{CT}.

\begin{theorem}[Hsu and Robbins]\label{thm:HRE}
Let $X_1,X_2,\ldots$ be i.i.d.\ random variables with finite mean $\mu$ and
finite variance. Then for all $\epsilon>0$,
$$
\sum_{n=1}^\infty  \P\bigl(|S_n-n\mu|\ge n \epsilon\bigr) <\infty.
$$
\end{theorem}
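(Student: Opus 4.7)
The plan is a truncation argument. By replacing $X_i$ with $X_i-\mu$ I may assume $\mu=0$, so the target becomes $\sum_n\P(|S_n|\geq n\epsilon)<\infty$. For each $n$ I split $X_i=Y_{n,i}+V_{n,i}$ with $Y_{n,i}:=X_i\ind[|X_i|\leq n]$, and use the union bound
$$\P(|S_n|\geq n\epsilon)\leq\P(\exists i\leq n:|X_i|>n)+\P\biggl(\biggl|\sum_{i\leq n}Y_{n,i}\biggr|\geq n\epsilon\biggr).$$
The first term is at most $n\P(|X|>n)$, and a standard Fubini identity shows $\sum_n n\P(|X|>n)$ is comparable to $\E X^2<\infty$, so it is summable.

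For the second term I absorb the truncation bias. Since $\E X=0$ and $|X|\ind[|X|>n]\leq X^2/n$, the centering shift satisfies $n|\E Y_{n,1}|\leq\E X^2$, so for $n$ large enough it suffices to bound $\P\bigl(\bigl|\sum_i(Y_{n,i}-\E Y_{n,1})\bigr|\geq n\epsilon/2\bigr)$. I will apply Markov's inequality with the fourth moment, using the identity $\E(\sum_i W_i)^4=n\E W^4+3n(n-1)(\E W^2)^2$ valid for i.i.d.\ zero-mean $W_i$. After dividing by $(n\epsilon/2)^4$, the $(\E W^2)^2$ contribution gives a summable $O(1/n^2)$ term, and the whole problem reduces to verifying
$$\sum_n\frac{\E[X^4\ind[|X|\leq n]]}{n^3}<\infty.$$
By Fubini this equals $\E\bigl[X^4\sum_{n\geq|X|}n^{-3}\bigr]$, and since $\sum_{n\geq x}n^{-3}\asymp x^{-2}$ for large $x$ the expression is dominated by a constant multiple of $\E X^2<\infty$.

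The delicate point is the forced choice of truncation level $a_n=n$: this is essentially the smallest level for which $\sum_n n\P(|X|>a_n)<\infty$ follows from a finite second moment alone, and yet the crude pointwise bound $\E Y_{n,1}^4\leq n^2\E X^2$ produces only an $O(1/n)$ tail estimate after the fourth-moment step, which is not summable. The Fubini trick above is what rescues the argument, implicitly using the decay $x^2\P(|X|>x)\to 0$ hidden in $\E X^2<\infty$ that the crude moment bound does not see; this is the main obstacle, and once it is in place the rest is routine.
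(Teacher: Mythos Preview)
Your proof is correct and follows the classical truncation-plus-fourth-moment route (essentially the original Hsu--Robbins argument), whereas the paper takes a genuinely different approach adapted from Chow--Teicher: instead of truncating, it splits $\{S_n>n\epsilon\}$ according to whether $\max_{i\leq n}X_i$ exceeds $\epsilon n/3$, and on the complementary event uses the strong Markov property at the first-passage time $\tau_h=\min\{k:S_k\geq h\}$ together with Kolmogorov's maximal inequality to obtain $\P(S_n>n\epsilon,\ \max_{i\leq n} X_i\leq\epsilon n/3)\leq\P(\tau_{\epsilon n/3}\leq n)^2\leq\bigl(9/(\epsilon^2 n)\bigr)^2$. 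The squaring is what delivers the summable $1/n^2$ directly, with no fourth-moment computation or Fubini interchange needed. Your method is more elementary in its ingredients (only moment identities and Fubini, no maximal inequality or Markov property), at the cost of the extra centering bookkeeping and the swap-of-summation step you rightly flag as the crux; the paper's method is shorter and extracts the decay more structurally from the random-walk picture. One minor point: when you replace $\E W^4$ (with $W=Y_{n,1}-\E Y_{n,1}$) by $\E[X^4\ind[|X|\leq n]]$, note that centering need not decrease fourth moments, but since $|\E Y_{n,1}|$ is uniformly bounded the discrepancy contributes only an additional $O(1/n^3)$ term, so your reduction stands.
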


\begin{proof}
We may assume without loss of generality that $\mu=0$ and $\E{X^2}=1$. Let
$X_n^*:=\max\{X_1,\ldots,X_n\}$ and $S_n^*:=\max\{S_1,\ldots,S_n\}$. Observe
that for any $h>0$, the stopping time $\tau_h:= \min{\{k : S_k \ge h\}}$
satisfies
\begin{equation}\label{eq:FirstStep}
\begin{multlined}
\P(S_n > 3h,\ X_n^* \le h) \\
\begin{aligned}
&\le \P(\tau_h<n,\ S_{\tau_h}\leq 2h) \;\P(S_n>3h \mid  \tau_h<n,\ S_{\tau_h}\leq 2h)\\
& \le \P(\tau_h \le n)^2
\end{aligned}
\end{multlined}
\end{equation}
where the last step used the strong Markov property at time $\tau_h$. Now
Kolmogorov's maximum inequality (see e.g.\ \cite[Lemma~4.15]{Kall}) implies
that
$$
\P(\tau_h \le n) =\P(S_n^* \ge h) \le \frac{\E{S^2_n}}{h^2}=\frac{n}{h^2}.
$$
Applying this with $h=\epsilon n/3$ we infer from (\ref{eq:FirstStep}) that
$$
\P\Bigl(S_n > n\epsilon,\ X_n^* \le \frac{\epsilon n}{3}\Bigr) \le
\frac{81}{\epsilon^4 n^2}.
$$
Moreover, we have
$$
\P\Bigl(X_n^*  >\frac{\epsilon n}{3}\Bigr) \le n\P\Bigl(X_1>\frac{\epsilon n}{3}\Bigr).
$$
Combining the last two inequalities give
$$
\P(S_n >n\epsilon) \le \frac{81}{\epsilon^4 n^2}
+ n \P\Bigl( X_1  >\frac{\epsilon n}{3}\Bigr).
$$
The first term on the right is summable in $n$, and the second term is
summable by the assumption of finite variance.  Applying the same argument to
$-S_n$ completes the proof.
\end{proof}

We will also a need a simple fact of real analysis, a converse to
H\"older's inequality, which we state in a probabilistic form.  See, e.g., Lemma 6.7 in \cite{Royden} for a related  statement.
The proof method is known as the ``gliding hump''; see \cite{Sokal} and the references therein.

%
\begin{lemma}\label{cor:RealAnalysisGeneral}
Let $p,q>1$ satisfy $1/p+1/q=1$. Assume that a nonnegative random variable
$X$ satisfies $\E{Xg(X)}<\infty$ for every nonnegative function $g$ that
satisfies $\E{g^q(X)}<\infty$. Then $\E{X^p}<\infty$.
\end{lemma}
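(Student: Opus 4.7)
The plan is to prove the contrapositive: assuming $\E X^p=\infty$, I construct a nonnegative measurable $g$ on $[0,\infty)$ with $\E[g(X)^q]<\infty$ but $\E[Xg(X)]=\infty$, which contradicts the hypothesis.

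The key move is the substitution $g(x):=x^{p-1}h(x)$ (taking $g(0)=0$) for an auxiliary nonnegative $h$ to be chosen. Using the identity $q(p-1)=p$, which is equivalent to $1/p+1/q=1$, the two desired conditions transform into
\[
\E[X^p h(X)^q]<\infty \quad\text{and}\quad \E[X^p h(X)]=\infty.
\]
It is therefore natural to introduce the Borel measure $\nu$ on $[0,\infty)$ defined by $d\nu(x):=x^p\,d\P_X(x)$ and reduce the task to exhibiting some $h\geq 0$ lying in $L^q(\nu)\setminus L^1(\nu)$. The measure $\nu$ is $\sigma$-finite (since $\nu([0,n])\leq n^p$) but has total mass $\nu([0,\infty))=\E X^p=\infty$ by the assumption we have negated.

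To produce such an $h$, I would apply a gliding-hump partition. Choose disjoint Borel sets $E_1,E_2,\ldots$ whose union is $[0,\infty)$ with each $0<\nu(E_j)<\infty$; this is possible by $\sigma$-finiteness combined with $\nu([0,\infty))=\infty$. Put $s_j:=\sum_{i\leq j}\nu(E_i)$, so $s_j\to\infty$, and define
\[
h:=\sum_{j\geq 1}\frac{1}{s_j}\,\ind_{E_j}.
\]
Then $\int h\,d\nu=\sum_{j\geq 1}(s_j-s_{j-1})/s_j$, which diverges by the standard estimate
\[
\sum_{j=N+1}^{M}\frac{s_j-s_{j-1}}{s_j}\;\geq\;\frac{s_M-s_N}{s_M}\;\longrightarrow\;1 \quad(M\to\infty),
\]
so the tails of the series do not go to zero. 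On the other hand $\int h^q\,d\nu=\sum_{j\geq 1}(s_j-s_{j-1})/s_j^q$; for $j\geq 2$ the pointwise bound $1/t^q\geq 1/s_j^q$ on $[s_{j-1},s_j]$ gives the telescoping comparison
\[
\sum_{j\geq 2}\frac{s_j-s_{j-1}}{s_j^q}\;\leq\;\int_{s_1}^{\infty}\frac{dt}{t^q}\;<\;\infty,
\]
finite precisely because $q>1$, and the $j=1$ term equals $s_1^{\,1-q}<\infty$.

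I do not foresee any serious obstacle: once the substitution $g(x)=x^{p-1}h(x)$ is in place, the problem becomes the purely measure-theoretic one of separating $L^q$ from $L^1$ on a $\sigma$-finite measure of infinite mass, which the partition above resolves cleanly. The one small point requiring care is the $\sigma$-finite decomposition, but this is routine since $\nu$ is finite on bounded intervals. Feeding the resulting $h$ back into $g(x)=x^{p-1}h(x)$ then yields $\E[g(X)^q]=\int h^q\,d\nu<\infty$ and $\E[Xg(X)]=\int h\,d\nu=\infty$, which is the desired counterexample to the hypothesis and completes the contrapositive argument.
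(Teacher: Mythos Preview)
Your proof is correct and follows the same gliding-hump strategy as the paper: both prove the contrapositive by taking $g(x)=x^{p-1}h(x)$ (the paper uses $\lfloor x\rfloor^{p-1}$) with $h$ a step function on blocks where the $x^p$-weighted mass is controlled, exploiting the identity $(p-1)q=p$. The only difference is bookkeeping: the paper arranges each block mass $S_\ell\geq 1$ and takes weights $1/(\ell S_\ell)$, so the two series reduce immediately to $\sum_\ell 1/\ell^q<\infty$ and $\sum_\ell 1/\ell=\infty$, whereas you weight by the reciprocal cumulative mass $1/s_j$ and handle the two series via an Abel--Dini tail estimate and an integral comparison.
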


\begin{proof}

Assume $\E{X^p}=\infty$. Letting $\psi_k:=\P(\lfloor X \rfloor =k)$, we have $\sum_{k=1}^\infty
\psi_k k^p =\E{\lfloor X \rfloor^p} =\infty$, so we can choose integers $0=a_0 <a_1 <a_2, \ldots$ such
that for each $\ell\ge 1$,
$$
S_\ell:=\sum_{k \in [a_{\ell-1},a_\ell)} \psi_k k^p\ge 1.
$$
Denote the interval $[a_{\ell-1}, a_{\ell})$ by $I_\ell$ and let $g$ be defined
on $[0,\infty)$ by
$$
g(x):=\frac{\lfloor x \rfloor^{p-1}}{\ell S_{\ell} } \mbox{ for } x\in I_\ell.
$$
Since $(p-1)q=p$, we obtain
$$
\E{g^q(X)}=\sum_{\ell=1}^\infty
\sum_{k\in I_\ell} \psi_k \frac{k^{p}}{\ell^q S_\ell^q}
=\sum_{\ell=1}^\infty \frac{1}{\ell^q S_\ell^{q-1}} <\infty.
$$
On the other hand
\[
\E{Xg(X)} \ge \sum_{\ell=1}^\infty
\sum_{k\in I_\ell} \psi_k \frac{k^p}{\ell S_\ell}
=\sum_{\ell=1}^\infty \frac{1}{\ell} =\infty.\qedhere
\]
\end{proof}

We can now proceed with the main proof.

\begin{proof}[Proof of Theorem~\ref{thm:main}, case $\alpha=2$]
We will first show that (i) and (ii) are equivalent. Assume (i) holds, i.e.\
$\E{X^2}<\infty$, and let $T$ satisfy $\E{T}<\infty$. We may assume without
loss of generality that $\E{X}=1$.  By the nonnegativity of the $X_i$, we
have
\begin{equation}\label{eq:converseHR}
 \P(S_T\ge 2n)\le
\P(T\ge n)+ \P(S_n\ge 2n).
\end{equation}
Since $\E{T}<\infty$, the first term on the right is summable in $n$. Since
$\E{X^2}<\infty$ and $\E{X}=1$, Theorem \ref{thm:HRE} with $\epsilon=1$
implies that the second term is also summable.  We conclude that
$\E{S_T}<\infty$.

Now assume (ii).  To show that $X$ has finite second moment, using Lemma
\ref{cor:RealAnalysisGeneral} with $p=q=1$, we need only show that for any
nonnegative function $g$ satisfying $\E{g^2(X)}<\infty$, we have
$\E{Xg(X)}<\infty$. Given such a $g$, consider the integer valued random
variable
\begin{equation}\label{tg}
T_g:=\max\{k\ge 1:g(X_k)\ge k\},
\end{equation}
where $T_g$ is taken to be 0 if the set is empty or $\infty$ is it is
unbounded. We have
$$
\E{T_g}=\sum_{k=1}^\infty  \P(T_g\ge k)\le
\sum_{k=1}^\infty \sum_{\ell=k}^\infty \P(g(X_\ell)\ge \ell)=
\sum_{\ell=1}^\infty \ell\;\P(g(X)\ge \ell).
$$
Since $\E{g^2(X)}<\infty$, the last expression is finite, and hence
$\E{T_g}<\infty$. Thus, by assumption (ii), we have $\E{S_{T_g}}<\infty$. However
\begin{equation}
\begin{aligned}\label{tg-calc}
\E{S_{T_g}}=\E \sum_{k=1}^\infty  X_k\ind[k\le T_g]
&\ge\E{\sum_{k=1}^\infty  X_k \ind[g(X_k)\ge k]}\\
&=\E{\sum_{k=1}^\infty  X\ind[g(X)\ge k]}
\ge
\E{X\lfloor g(X)\rfloor},
\end{aligned}
\end{equation}
so that $\E{X\lfloor g(X)\rfloor}<\infty$, which easily yields
$\E{Xg(X)}<\infty$ as required.

Clearly (ii) implies (iii).  Finally, we proceed as in the proof of
Theorem~\ref{thm:exp} to show (iii) implies (i).  Suppose (i) fails and (iii)
holds.  Taking $T\equiv1$ in (iii) shows that $\E X<\infty$.  Since $\E
X^2=\infty$, Lemma~\ref{cor:RealAnalysisGeneral} implies the existence of a
$g$ with $\E g^2(X)<\infty$ but $\E X g(X)=\infty$.  Let $T_g$ be defined as
in \eqref{tg} above, for this $g$.  The argument above shows that $\E
S_{T_g}=\infty$ while $\E T_g<\infty$, and so the assumption (iii) gives $\E
X_{T_g}<\infty$.  However this contradicts Lemma~\ref{last}.
\end{proof}

We also obtain the following converse of the Hsu-Robbins Theorem due to
Erd\H{o}s.

\begin{corollary}\label{cor:erdos}
Let $X_1,X_2,\ldots$ be i.i.d.\ nonnegative random variables with finite mean
$\mu$.  Write $S_n=\sum_{i=1}^n X_i$ and $X=X_1$. If, for all $\epsilon >0$,
$$
\sum_{n=1}^\infty  \P(|S_n-n\mu|\ge n \epsilon) <\infty,
$$
then $X$ has a finite variance.
\end{corollary}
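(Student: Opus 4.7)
The plan is to derive this as a direct consequence of Theorem~\ref{thm:main} in the case $\alpha=2$ that has just been proved. Specifically, I will verify condition (ii) of that theorem (i.e., $\E S_T<\infty$ for every $T$ with $\E T<\infty$), and conclude (i) (finite second moment of $X$) from the equivalence.

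First I would dispose of the degenerate case $\mu=0$: since the $X_i$ are nonnegative, $\mu=0$ forces $X\equiv 0$ a.s., which certainly has finite variance. So assume $\mu>0$. Let $T$ be an arbitrary nonnegative integer-valued random variable with $\E T<\infty$. The central observation is the simple bound
$$\P(S_T\ge 2n\mu)\le \P(T\ge n)+\P(S_n\ge 2n\mu),$$
valid for every $n\ge 1$. Indeed, if $T<n$ then, by nonnegativity of the $X_i$, $S_T\le S_{n}$, so $\{S_T\ge 2n\mu\}\cap\{T<n\}\subseteq\{S_n\ge 2n\mu\}$.

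Summing this bound over $n$ and using $\E S_T\le 2\mu+2\mu\sum_{n\ge 1}\P(S_T\ge 2n\mu)$ (which follows from $\E S_T=\int_0^\infty\P(S_T>t)\,dt$ applied to the partition $[2(n-1)\mu,2n\mu)$), we get
$$\E S_T\le 2\mu+2\mu\sum_{n=1}^\infty\P(T\ge n)+2\mu\sum_{n=1}^\infty\P(S_n\ge 2n\mu).$$
The first sum equals $\E T<\infty$ by assumption. The second sum is bounded by $\sum_{n=1}^\infty\P(|S_n-n\mu|\ge n\mu)$, which is finite by the hypothesis of the corollary applied with $\epsilon=\mu$. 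Hence $\E S_T<\infty$.

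Thus condition (ii) of Theorem~\ref{thm:main} (with $\alpha=2$) holds, and invoking the already established equivalence with (i) yields $\E X^2<\infty$, i.e., $X$ has finite variance. I do not expect any real obstacle here; the work in this corollary has essentially been done already in the proof of the implication (i)$\Rightarrow$(ii) of Theorem~\ref{thm:main} in the $\alpha=2$ case, where the very same splitting \eqref{eq:converseHR} appears. The only minor point to check carefully is the direction of the inequality between $\E S_T$ and the tail sum, and the application of the summability hypothesis with a specific choice of $\epsilon$ tied to the mean $\mu$.
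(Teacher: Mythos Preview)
Your proposal is correct and follows essentially the same approach as the paper: both proofs verify condition~(ii) of Theorem~\ref{thm:main} with $\alpha=2$ by means of the splitting in~\eqref{eq:converseHR}, and invoke the hypothesis of the corollary with a specific $\epsilon$ to handle the second term. The only cosmetic differences are that the paper normalizes to $\mu=1$ and uses $\epsilon=1$, whereas you keep a general $\mu>0$ and use $\epsilon=\mu$, and you spell out the degenerate case $\mu=0$ and the tail-sum-to-expectation step explicitly.
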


\begin{proof}
Without loss of generality, we can assume that $\mu=1$. By
Theorem~\ref{thm:main} with $\alpha=2$, it suffices to show that
$\E{S_T}<\infty$ for all $T$ with finite mean. However, this is immediate
from (\ref{eq:converseHR}) -- the first term on the right is summable since
$T$ has finite mean, and the second term is summable by the assumption of the
corollary with $\epsilon=1$.
\end{proof}

\section{The case of $\alpha<2$}

The proof of Theorem \ref{thm:main} in the general case follows very closely
the proof for $\alpha=2$.  We need the follow replacement of
Theorem~\ref{thm:HRE} due to Katz \cite{K}, whose proof we do not give here.
A converse of the results in \cite{K} appears in \cite{BK}.  We will also use
the general case of Lemma~\ref{cor:RealAnalysisGeneral}.

\begin{theorem}[Katz]\label{thm:Katz}
Let $X_1,X_2,\ldots$ be i.i.d.\ random variables satisfying
$\E{|X_1|^t}<\infty$ with $t\ge 1$. If $r>t$, then, for all $\epsilon >0$,
$$
\sum_{n=1}^\infty n^{r-2} \P\bigl(|S_n|\ge n^{r/t} \epsilon\bigr) <\infty.
$$
\end{theorem}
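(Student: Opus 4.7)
My plan is to adapt the truncation scheme from the Hsu-Robbins proof (Theorem~\ref{thm:HRE}) to the scale $n^{r/t}$. I would set $h_n:=\epsilon n^{r/t}/3$ and $X_n^*:=\max_{i\le n}|X_i|$, and decompose
$$\P\bigl(|S_n|\ge \epsilon n^{r/t}\bigr) \le \P(X_n^*>h_n) + \P\bigl(|S_n|\ge \epsilon n^{r/t},\ X_n^*\le h_n\bigr).$$
The first term, after a union bound, contributes $\sum_n n^{r-1}\P(|X_1|>h_n)$, which via a layer-cake identity (and the substitution $u=n^{r/t}$) is comparable to $\int_0^\infty u^{t-1}\P(|X_1|>u)\,du\asymp \E|X_1|^t$, and is therefore finite by hypothesis.

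For the second term, I would truncate $X_i^{(n)}:=X_i\ind[|X_i|\le h_n]$ and set $S_n^{(n)}:=\sum_{i\le n}X_i^{(n)}$. Since $t\ge 1$ gives $\E|X_1|<\infty$ and $r/t>1$, the mean shift $n\E X_1^{(n)}$ is $O(n)=o(n^{r/t})$, so for large $n$ it suffices to estimate $\P\bigl(|S_n^{(n)}-n\E X_1^{(n)}|\ge \epsilon n^{r/t}/2\bigr)$. I would bound this via Markov's inequality applied to the $(2k)$-th moment, and then use a Rosenthal- (or Marcinkiewicz-Zygmund-)type inequality to bound that moment by a linear combination of terms of the form $n^j\E(X_1^{(n)})^{2j}$ for $1\le j\le k$; each such moment is evaluated by layer-cake in terms of the truncated tail of $|X_1|$.

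The hard part will be the bookkeeping of exponents. I need to choose the even moment $2k$ large enough that, after dividing by $(n^{r/t})^{2k}$ and multiplying by the weight $n^{r-2}$, each term in the Rosenthal expansion produces a power of $n$ strictly below $-1$; the condition $r/t>1$ is exactly what makes this arrangement possible. A further subtlety when $t<2$ is that $\E(X_1^{(n)})^2$ itself grows with $n$ like a power of $h_n$, so the mixed terms in the Rosenthal expansion must be tracked with care to ensure that, after exchanging the orders of summation, the final double sum reduces to a constant multiple of $\E|X_1|^t$. This careful balancing of the truncation level against the moment order is the technical core of Katz's original argument.
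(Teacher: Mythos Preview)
The paper does not actually prove Theorem~\ref{thm:Katz}; the authors state it with attribution to Katz~\cite{K} and write explicitly ``whose proof we do not give here.''  So there is no in-paper argument to compare your proposal against.

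On its own merits, your outline is the standard truncation-plus-high-moment route and is structurally sound.  Two points are worth sharpening.  First, the form you quote for the moment bound (``a linear combination of $n^j\E(X_1^{(n)})^{2j}$'') is not quite what Rosenthal or the raw multinomial expansion delivers; Rosenthal gives just the two terms $n\,\E|Y_1|^{2k}$ and $(n\,\E Y_1^2)^k$ (with $Y_1$ the centered truncated variable), and it is these whose exponents you must balance.  Second---and you correctly flag this---the crude estimate $\E|Y_1|^{2k}\le C\,h_n^{2k-t}$ on the first Rosenthal term yields, after weighting by $n^{r-2}$, a contribution of \emph{exact} order $n^{-1}$, which is not summable; here the layer-cake representation of $\E|X_1^{(n)}|^{2k}$ together with a Fubini swap against the sum over $n$ is genuinely needed to recover a constant multiple of $\E|X_1|^t$.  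For the second Rosenthal term the crude bound already suffices: when $t<2$ one obtains exponent $r-2+k(1-r)$, and when $t\ge 2$ one obtains $r-2+k(1-2r/t)$, both of which drop below $-1$ once $k$ is taken large enough.  With these refinements your plan goes through and matches the argument in~\cite{K}.
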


\begin{proof}[Proof of Theorem~\ref{thm:main}, case $\alpha<2$.]
We first prove that (i) implies (ii).  Assume that $\E X^\alpha <\infty$, and
$T$ is an integer valued random variable with $\E T^{1/(\alpha-1)}<\infty$.
Observe that
\begin{equation}\label{eq:converseHRAGAIN}
\P(S_T\ge n)\le
\P\bigl(T\ge \lceil n^{\alpha-1}\rceil\bigr) +
\P\bigl(S_{\lceil n^{\alpha-1}\rceil}\ge n\bigr).
\end{equation}
Since $\P(T\ge \lceil n^{\alpha-1}\rceil)\le \P(T^{1/(\alpha-1)}\ge n)$, the
first term on the right is summable.  For the second, we have
\begin{align*}
\sum_{n=1}^\infty \P\bigl(S_{\lceil n^{\alpha-1}\rceil}\ge n\bigr)&\le
\sum_{k=1}^\infty \sum_{\substack{n\geq 1:\\ \lceil n^{\alpha-1}\rceil=k}}\P(S_{k}\ge n)\\
&\le
\sum_{k=1}^\infty \sum_{\substack{n\geq 1:\\ \lceil n^{\alpha-1}\rceil=k}}
\P\bigl({S_{k}\ge (k-1)^{\frac{1}{\alpha-1}}}\bigr)
\end{align*}
since $\lceil n^{\alpha-1}\rceil=k$ implies that $n\ge (k-1)^{1/(\alpha-1)}$.
It is easy to check that there exists $C_\alpha$ such that for all $k\geq 1$,
$$
\#\{n:\lceil n^{\alpha-1}\rceil=k\}\le C_\alpha k^{\frac{2-\alpha}{\alpha-1}}.
$$
Hence the last double sum is at most
$$
C_\alpha \sum_{k=1}^\infty k^{\frac{2-\alpha}{\alpha-1}}
\P\bigl({S_{k}\ge (k-1)^{\frac{1}{\alpha-1}}}\bigr).
$$
Now using Theorem \ref{thm:Katz} with $t=\alpha$ and $r=\alpha/(\alpha-1)$
and $\epsilon=\frac{1}{2}$ (and noting that $k^{1/(\alpha-1)}/2 \leq
(k-1)^{1/(\alpha-1)}$ for large enough $k$), we conclude that the above
expression is finite.  Hence $\E{S_T}<\infty$, as required.

Next we show that (ii) implies (i).  To show that $X$ has a finite
$\alpha$-moment, using Lemma~\ref{cor:RealAnalysisGeneral}, it suffices to
show that for any nonnegative function $g$ satisfying $\E
g^{\alpha/(\alpha-1)}(X)<\infty$, we have $\E{Xg(X)}<\infty$. Given such a
$g$, consider as before the integer valued random variable
$$
T_g:=\max\{k\ge 1:g(X_k)\ge k\},
$$
where $T_g$ is taken to be 0 if the set in empty or $\infty$ if it is
unbounded. Observe that
\begin{align*}
\sum_{k=1}^\infty k^\frac{2-\alpha}{\alpha-1}  \P(T_g\ge k)&\le
\sum_{k=1}^\infty k^\frac{2-\alpha}{\alpha-1} \sum_{\ell=k}^\infty
\P(g(X_\ell)\ge \ell)\\
&\le \sum_{\ell=1}^\infty \ell^{\frac{1}{\alpha-1}}\P(g(X)\ge \ell) .
\end{align*}
If $\E g^{\alpha/(\alpha-1)}(X)<\infty$ then the last sum is finite and hence
$\E T_g^{1/(\alpha-1)}<\infty$. By assumption (ii) we have
$\E{S_{T_g}}<\infty$.  However, as argued in \eqref{tg-calc}, $\E{S_{T_g}}\ge
\E{X\lfloor g(X)\rfloor}$.  Therefore $\E{X\lfloor g(X)\rfloor}<\infty$, so
$\E{Xg(X)}<\infty$ as required.

Clearly (ii) implies (iii).  Finally, suppose (i) fails and (iii) holds.
Taking $T\equiv1$ in (iii) shows that $\E X<\infty$.  Since $\E
X^\alpha=\infty$, Lemma~\ref{cor:RealAnalysisGeneral} implies the existence
of a $g$ with $\E g^{\alpha/(\alpha-1)}(X)<\infty$ but $\E X g(X)=\infty$.
Then, as before, $T_g$ as defined above gives a contradiction to
Lemma~\ref{last}.
\end{proof}

\section{The dependent case}

\begin{proof}[Proof of Proposition~\ref{thm:GeneralMomentNOTiid}]
Assume (i) holds. If $\E{T^{\alpha/(\alpha-1)}}<\infty$ and $X_1,X_2,\ldots$
are as in (ii), then we can write
$$
S_T\le \sum_{k=1}^T
k^{\frac{1}{\alpha-1}}+
\sum_{k=1}^T
X_k\ind\bigl[X_k\ge  k^{\frac{1}{\alpha-1}}\bigr].
$$
The first sum is at most $T^{\alpha/(\alpha-1)}$ which has finite
expectation. The expectation of the second sum is at most
$$
\E{\sum_{k=1}^\infty X\ind[X\ge  k^{\frac{1}{\alpha-1}}]}
\le \E(X X^{\alpha-1})= \E{X^{\alpha}}<\infty.
$$
Hence $\E{S_T}<\infty$, as claimed in (ii).

Now assume (ii) holds.  To show that $X$ has finite $\alpha$-moment, using
Lemma \ref{cor:RealAnalysisGeneral}, it is enough to show that for any
nonnegative $g$ satisfying $\E{g^{\alpha/(\alpha-1)}(X)}<\infty$, we have
$\E{Xg(X)}<\infty$. It is easily seen that it suffices to only consider $g$
that are integer valued. Given such a $g$, let $T$ be $g(X)$ and let all the
$X_i$ be equal to $X$.  Then $\E T^{\alpha/(\alpha-1)}<\infty$. By (ii),
$\E{S_T}<\infty$. However, by construction $S_T=Xg(X)$, concluding the proof.
\end{proof}

\section*{Acknowledgements}
Most of this work was carried out when the third author was
visiting Microsoft Research at Redmond, WA, and he thanks
the Theory Group for its hospitality. JES also acknowledges the
support of the Swedish Research Council and the Knut and Alice Wallenberg
Foundation.


\end{document}